\numberwithin{equation}{section} 
\newtheorem{lem}{Lemma}
\newtheorem{prop}[lem]{Proposition}
\theoremstyle{definition}
\theoremstyle{remark}
\newtheorem{rem}{Remark}
\newcommand{\swedge}{{\scriptstyle\wedge}}
\newcommand{\nwedge}{\mathchoice{{\textstyle\wedge}}%
    {{\wedge}}%
    {{\textstyle\wedge}}%
    {{\scriptstyle\wedge}}}
\newsavebox{\spacebox}
\title[Note on the codimension of  the linear section of $L (6, 12)$]{A note on the codimension of  the linear section of the Lagrangian-Grassmannian $L (6, 12)$}
\author[J. Carrillo--Pacheco]{Jes\'us Carrillo--Pacheco}
\author[F. Jarqu\'{\i}n--Z\'arate]{Fausto Jarqu\'{\i}n--Z\'arate}
\author[M. Velasco--Fuentes]{Maurilio Velasco--Fuentes}
\author[F. Zaldivar]{Felipe Zaldivar}
\address[J. Carrillo--Pacheco, F. Jarqu\'{\i}n--Z\'arate, M. Velasco--Fuenetes]{Academia de Matem\'aticas, Universidad Aut\'onoma de la Ciudad de M\'exico, 09390 M\'exico, D. F., M\'exico.}
\address[F. Zaldivar]{Departamento de Matem\'aticas, Universidad Aut\'onoma Metropolitana-I, 09340 M\'exico, D. F., M\'exico.}
\email[J. Carrillo--Pacheco]{jesus.carrillo@uacm.edu.mx}
 \email[F. Jarqu\'{\i}n--Z\'arate]{fausto.jarquin@uacm.edu.mx}
 \email[M. Velasco--Fuentes]{maurilio.velasco.fuentes@uacm.edu.mx}
 \email[F. Zaldivar]{fz@xanum.uam.mx}
 \thanks{Jes\'us Carrillo-Pacheco, Fausto Jarqu\'{\i}n-Z\'arate and
Maurilio Velasco-Fuentes are supported by the Cryptography
Laboratory Project PI2013-29. Secretar{\'\i}a de Ciencia,
Tecnolog{\'\i}a e Innovaci\'on del Distrito Federal (SECITI),
M\'exico.}
\begin{document}

\keywords{Lagrangian--Grassmannian  variety; exterior algebra;
alternating bilinear form.}

\subjclass[2010]{14M15, 15B75, 15B99.}

\begin{abstract}
{Consider a $2n$-dimensional symplectic vector space $E$
    over an arbitrary field $\mathbb{F}$. Given a contraction map
    $f: \wedge^n E \rightarrow \wedge^{n-2} E$ such that the Lagrangian--Grassmannian
    $L(n,2n)=G(n,2n)\cap{\mathbb P}(\ker f)$, where $\wedge^r E$ denotes the $r$-th exterior
    power of $E$ and ${\mathbb P}(\ker f)$ is the projectivization of $\ker f$.
    In this paper, for a symplectic vector space $E$ of dimension $n=6$,
    we prove that the surjectivity of the contraction map $f:\wedge^{6} E \rightarrow \wedge^{4} E$
    depends on the   characteristic of the base field and we calculate the codimension of the
    linear section ${\mathbb P}(\ker f)\subseteq {\mathbb P}(\nwedge^{6}E)$ for any characteristic.}
\end{abstract}

 \maketitle

\section{Introduction}\label{Intro}

Let $E$ be  a $2n$-dimensional symplectic vector space over an
arbitrary field $\mathbb{F}$ with symplectic form $\langle\;
,\;\rangle$. Consider  {\it the contraction map}
$f:\nwedge^nE\rightarrow \nwedge^{n-2}E$
given by
\begin{gather}\label{mapf}
f(w_1\swedge\cdots\swedge w_n)=\sum_{1\leq s<t\leq n}\langle
w_s,w_t\rangle w_1\swedge\cdots\swedge
\widehat{w}_s\swedge\cdots\swedge
\widehat{w}_t\swedge\cdots\swedge w_n,
\end{gather}
where $\widehat{w}$ means that the corresponding term is omitted.
Our main result shows that, in general, the map $f$ is not
surjective. Since,  by \cite{bib2} the Lagrangian-Grassmannian variety $L(n,2n)$ is cut out by the projectivization ${\mathbb P}(\ker f)$ of the kernel of $f$, it follows that   the codimension of $L(n,2n)$ in its Pl\"ucker embedding
 is not $C^{2n}_n-C^{2n}_{n-2}$, where $C_n^m$ denotes the binomial
 coefficient. Specifically, we prove that for $n=6$, and a field of characteristic $3$, the contraction map $f:\nwedge^6 E\rightarrow \nwedge^{4}E$ given by \eqref{mapf}   is not surjective. To prove this, we use a combinatorial description in Lemma \ref{Part} of the set of indices that label the   Pl\"ucker linear relations that is then used to
 describe the linear section ${\mathbb P}(\ker f)$ that cuts out the Lagrangian-Grassmannian $L(6,12)$ in the Grassmannian variety $G(6,12)$ in any
 characteristic. As a consequence we show that the codimension of $L(6,12)$ in its Pl\"ucker embedding
 depends of the characteristic of the base field.

The paper is organized as follows. In Section $\ref{PreTer}$ we recall some results of the contraction map
\eqref{mapf} and the Lagrangian--Grassmannian. In Section
$\ref{EjemP}$ we give an explicit example where the
contraction map is not surjective and give all the details
involved  to obtain the linear section ${\mathbb P}(\ker f)$ that defines $L(6,12)$.

\section{Preliminaries}\label{PreTer}

Let  $E$ be a $2n$--dimensional vector space over $\mathbb{F}$
equipped with a non-degenerate symplectic form $\langle\;,
\;\rangle$. Define the set $I(\ell,m)=\{\alpha=(\alpha_1,\ldots,\alpha_{\ell}) :
1\leq\alpha_1<\cdots<\alpha_{\ell}\leq m\}$, such that $\alpha_i \in
\mathbb{N}$, and  the support of $\alpha=(\alpha_1,\ldots,\alpha_{\ell}) \in I(\ell,m)$  as
the set $\text{supp}(\alpha):=\{\alpha_1,\ldots,\alpha_{\ell}\}$. Thus, all indices $\alpha$ are ordered sets of $\ell$ different integers in the set $\{1,2,\ldots,m\}$. In what follows, all indices $\alpha$ are sets with $\ell$ different elements  in the set
$\{1,2,\ldots,m\}$, and up to permutation, we may (and do so) think of them in $I(\ell,m)$.

Choose a basis $\{e_1,\ldots,e_{2n}\}$ of the  symplectic space $E$
such that
$$
\langle e_i,e_j\rangle=\begin{cases}
1& \text{if $j=2n-i+1$}, \\
0 & \text{otherwise}.
\end{cases}$$
Then, for $\alpha=(\alpha_1,\ldots,\alpha_n)\in I(n,2n)$ write
\begin{align*}
e_{\alpha}&:=e_{\alpha_1}\swedge\cdots\swedge e_{\alpha_n},\\
e_{\alpha_{st}}&:=e_{\alpha_1}\swedge\cdots\swedge\widehat{e}_{\alpha_s}
\swedge\cdots\swedge\widehat{e}_{\alpha_t}\swedge\cdots   \swedge e_{\alpha_n},\\
p_{i\alpha_{st}(2n-i+1)}&:=p_{i\alpha_1 \cdots \widehat{\alpha}_s
\cdots \widehat{\alpha}_t \cdots \alpha_n(2n-i+1)},
\end{align*}
where $\widehat{e}_{{\alpha}_k}$ and
$\widehat{\alpha}_k$ means that the corresponding term is omitted.
Denote  by $\nwedge^nE$ the $n$-th exterior power of $E$,
which is  generated by $\{ e_{\alpha}: \alpha \in
I(n,2n)\}$. For $w=\sum_{\alpha\in I(n,2n)}p_{\alpha}e_{\alpha} \in
\nwedge^nE$, the coefficients $p_{\alpha}$ are the {\it Pl\"ucker
coordinates of} $w$. In \cite[Proposition
6]{bib2} the kernel of the contraction map $f$ is characterized as follows: For $w=\sum_{\alpha\in I(n,2n)}p_{\alpha}e_{\alpha}\in
\nwedge^nE$ written in Pl\"ucker coordinates,  we have that
$$w\in\ker f\iff \sum_{i=1}^np_{i\alpha_{st}(2n-i+1)}=0,\;\text{for all $\alpha_{st}\in I(n-2,2n)$}.$$

In \cite[Section 3]{bib2} these linear forms were given the following
description: For $\alpha_{st}\in I(n-2,2n)$ define the linear
polynomials
$$\Pi_{\alpha_{st}}:=\sum_{i=1}^nc_{i,\alpha_{st},2n-i+1}X_{i,\alpha_{st},2n-i+1},$$
with
$$c_{i,\alpha_{st},2n-i+1}=\begin{cases}
1  & \text{if $|\text{supp}\{i,\alpha_{st},2n-i+1\}|=n$}, \\
0 & \text{otherwise},
\end{cases}$$
hence $\Pi_{\alpha_{st}}$ are polynomials in the ring ${\mathbb F}[X_{\alpha}:\alpha\in I(n,2n)]$. From the following
formula in Pl\"ucker coordinates
\begin{equation}\label{eq2.1}
 X_{1\sqcup 2n}+X_{2\sqcup (2n-1)}+\cdots+X_{n\sqcup (n+1)}=0,
  \end{equation}
where the symbols $\sqcup$ are to be replaced by elements
$\alpha_{st}\in I(n-2,2n)$,    we obtain
homogeneous linear equations, that we call a {\it Pl\"ucker linear relations} in $k$-variables,
\begin{gather}\label{Eqxx}
\Pi_{\alpha_{st}}:=\; X_{1,\alpha_{st},  2n}+X_{2,\alpha_{st},
(2n-1)}+\cdots+X_{n,\alpha_{st},  (n+1)}=0
\end{gather}
where the term $X_{i,\alpha_{st},  (2n-i+1)}$ does
not appear if $|\text{supp}\{i,\alpha_{st},  (2n-i+1)\}|<n$. When
this happens we say $\Pi_{\alpha_{st}}$ is a $k$-plane.
For the system of homogeneous linear equations $\Pi_{\alpha_{st}}$,
$\alpha_{st}\in I(n-2,2n)$, we denote by $B$ its associated matrix.
 Clearly the matrix $B$ is of order  $C^{2n}_{n-2}\times C^{2n}_{n}$. For example, if $n=6$ formula \eqref{Eqxx} becomes
 \begin{equation}\label{eq2.4}
 X_{1\sqcup C}+X_{2\sqcup B}+X_{3\sqcup A}+X_{4\sqcup 9}+X_{5\sqcup 8}+X_{6\sqcup 7}=0,
 \end{equation}
 where $A=10$, $B=11$, $C=12$.

Recall that a vector subspace $W$ of $E$ is \emph{isotropic} iff for all
$x,y \in W$ we have that $\langle\; x, y\;\rangle=0$, and if $W$ is isotropic
its dimension is at most $n$. The {\it Lagrangian-Grassmannian}
$L(n,2n)$ is the projective variety given by the isotropic vector
subspaces  $W\subseteq E$ of maximal dimension $n$:
$$L(n,2n)=\{W\in G(n,2n): W\;\text{is isotropic and $n$-dimensional}\},$$
where $G(n,2n)$ denotes the Grassmannian variety of vector subspaces
of dimension $n$ of $E$. The {\it Pl\"ucker embedding} is the
regular map $\rho:G(n,2n)\rightarrow {\mathbb P}(\wedge^nE)$
given on each $W\in G(n,2n)$ by choosing  a basis $w_1,\ldots,
w_n$ of $W$ and then mapping the vector subspace $W\in G(n,2n)$ to
the tensor $w_1\swedge\cdots\swedge w_n\in \nwedge^nE$. Since
choosing a different basis  of $W$ changes the tensor
$w_1\swedge\cdots\swedge w_n$ by a nonzero scalar, this tensor is a
well-defined element in the projective space ${\mathbb
P}(\nwedge^nE)\simeq{\mathbb P}^{N-1}$, where
$N=C^{2n}_n=\dim_{\mathbb{F}}(\nwedge^nE)$. Under the Pl\"ucker
embedding, the Lagrangian-Grassmannian is given by
$$L(n,2n)=\{w_1\swedge\cdots\swedge w_n\in G(n,2n): \langle w_i,w_j\rangle=0\;\text{for all $1\leq i<j\leq n$}\}.$$
Using  the contraction map  $f:\nwedge^nE\rightarrow
\nwedge^{n-2}E$ given by \eqref{mapf}, if ${\mathbb P}(\ker f)$ is
the projectivization of $\ker f$, in \cite{bib2} it is proved that $L(n,2n)=G(n,2n)\cap{\mathbb P}(\ker f)$.
We call ${\mathbb P}(\ker f)$ the linear section that defines $L(n,2n)$ in ${\mathbb P}(\nwedge^nE)$.
\section{Non surjectivity of the contraction map in $\text{\rm char}({\mathbb F}) =3$}\label{EjemP}

The purpose of this section is twofold: First, to provide a description, completely
explicit and self-contained of the linear space
${\mathbb{P}}(\ker f)$ for $n=6$, for any field
$\mathbb{F}$, and then using this characterization we give an example of the non surjectiviy of the contraction map for a field of characteristic $3$.

Let $P_1=(1,C), P_2=(2,B), P_3=(3,A), P_4=(4,9), P_5=(5,8), P_6=(6,7)$,
 where $A=10, B=11, C=12$, as in Section \ref{PreTer},
 $\Sigma_6
=\{P_1,P_2,\ldots,P_6\}$, and $C_2(\Sigma_6)$ the set of all
combinations of $6$ objects taken 2 at a time. For
$1\leq\alpha_1<\alpha_2\leq 12$ such that $\alpha_1+\alpha_2\neq
13$, we define the following set
\begin{gather*}
\Sigma\{\alpha_1,\alpha_2\} = \{(\alpha_1,\alpha_2, P_i)\in I(4,12) : \; i +
\alpha_j \neq 13, \alpha_j+13-i \neq 13, j= 1,2\}.
\end{gather*}

Now, for $1\leq \alpha_1< \alpha_2 < \alpha_3 <\alpha_4 \leq 12$
such that $\alpha_i + \alpha_j \neq 13$, define
\begin{gather*}
\Sigma\{\alpha_1,\alpha_2,\alpha_3,\alpha_4\} =
\{(\alpha_1,\alpha_2,\alpha_3,\alpha_4)\in I(4,12) : \alpha_i +\alpha_j \neq 13
\mbox{ with } 1\leq i,j \leq 4\}.
\end{gather*}

\begin{lem}\label{Part}
With the notation above  we have a partition of $I(4,12)$, given by
    $$  C_2(\Sigma_6) \cup \big( \bigcup_{1\leq \alpha_1< \alpha_2 \leq
        12 \atop \alpha_1 +\alpha_2 \neq 13}\Sigma
    \{\alpha_1,\alpha_2\}\big) \cup \big(\bigcup_{1 \leq \alpha_1 <
        \alpha_2 < \alpha_3 < \alpha_4 \leq 12 \atop \alpha_i +\alpha_j
        \neq 13} \Sigma\{\alpha_1,\alpha_2,\alpha_3,\alpha_4\} \big).$$
\end{lem}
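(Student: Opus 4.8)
The plan is to show the three families of sets listed in the statement are pairwise disjoint and that their union is all of $I(4,12)$; since each is manifestly a subset of $I(4,12)$, this suffices. I would organize the argument around a single invariant attached to an index $\alpha=(\alpha_1,\alpha_2,\alpha_3,\alpha_4)\in I(4,12)$: the number $k(\alpha)$ of \emph{symplectic pairs} it contains, i.e.\ the number of unordered pairs $\{i,j\}\subseteq\{1,2,3,4\}$ with $\alpha_i+\alpha_j=13$. Since $\{1,12\},\{2,11\},\{3,10\},\{4,9\},\{5,8\},\{6,7\}$ are the six pairs summing to $13$ and they are disjoint, a $4$-element subset of $\{1,\dots,12\}$ can contain at most two of them, so $k(\alpha)\in\{0,1,2\}$. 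I would then check that $k(\alpha)=2$ is exactly the condition defining membership in $C_2(\Sigma_6)$ (a choice of two of the $P_i$), that $k(\alpha)=1$ places $\alpha$ in exactly one $\Sigma\{\alpha_1,\alpha_2\}$, and that $k(\alpha)=0$ places $\alpha$ in exactly one $\Sigma\{\alpha_1,\alpha_2,\alpha_3,\alpha_4\}$.

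First I would dispose of the case $k(\alpha)=2$: if $\alpha$ contains two disjoint symplectic pairs then it \emph{is} the union of two of the $P_i$, hence a unique element of $C_2(\Sigma_6)$, and conversely every element of $C_2(\Sigma_6)$ (viewed as a $4$-subset after reordering) has $k=2$. Next, the case $k(\alpha)=0$: here no two of the four coordinates sum to $13$, which is precisely the defining condition of the sets $\Sigma\{\alpha_1,\alpha_2,\alpha_3,\alpha_4\}$, and $\alpha$ lies in exactly the one indexed by its own entries. The case $k(\alpha)=1$ is the one requiring care and is where I expect the main bookkeeping: write the unique symplectic pair as $\{\alpha_j, 13-\alpha_j\}=P_i$ for a unique $i$, and let $\{\alpha_1,\alpha_2\}$ (in the notation of $\Sigma\{\alpha_1,\alpha_2\}$) be the \emph{other} two coordinates of $\alpha$. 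One must verify that $\alpha_1+\alpha_2\neq 13$ (true since $\alpha$ has only one symplectic pair) and that the side conditions $i+\alpha_j\neq 13$ and $\alpha_j+13-i\neq 13$ for $j=1,2$ hold; these two conditions say exactly that neither of the remaining coordinates equals $13-i$ or equals $i$, i.e.\ that neither remaining coordinate lies in the pair $P_i$ already used — again automatic because $\alpha$ is a set of four \emph{distinct} integers and $P_i$ contributes two of them. Conversely, given $\{\alpha_1,\alpha_2\}$ with $\alpha_1+\alpha_2\neq 13$ and any $P_i$ satisfying the side conditions, the $4$-set $\{\alpha_1,\alpha_2\}\cup P_i$ has exactly one symplectic pair (the side conditions prevent $\{\alpha_1,\alpha_2,13-i\}$ or $\{\alpha_1,\alpha_2,i\}$ from forming a second one, and $\alpha_1+\alpha_2\neq13$), so membership is in a unique such $\Sigma\{\alpha_1,\alpha_2\}$.

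Having matched the three cases $k(\alpha)=2,1,0$ bijectively with the three families, disjointness follows because $k(\alpha)$ is a well-defined function of $\alpha$ taking distinct values on the three families, and within each family the earlier uniqueness checks show no $\alpha$ is listed twice. Surjectivity onto $I(4,12)$ follows because every $\alpha\in I(4,12)$ has some value $k(\alpha)\in\{0,1,2\}$ and hence lands in the corresponding family. The only genuine obstacle is the careful translation of the two side conditions ``$i+\alpha_j\neq 13$'' and ``$\alpha_j+13-i\neq 13$'' into the statement ``$\alpha_j\notin P_i$,'' together with keeping the reindexing between the ordered tuple in $I(4,12)$ and the unordered decomposition $\{\alpha_1,\alpha_2\}\sqcup P_i$ straight; everything else is a short finite verification.
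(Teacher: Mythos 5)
Your proposal is correct and follows essentially the same route as the paper: a case analysis on how many pairs of coordinates of $\alpha$ sum to $13$ (two, one, or none), matching these cases with $C_2(\Sigma_6)$, the sets $\Sigma\{\alpha_1,\alpha_2\}$, and the sets $\Sigma\{\alpha_1,\alpha_2,\alpha_3,\alpha_4\}$ respectively. Your version is in fact slightly more thorough than the paper's, since you also verify explicitly that the side conditions $i+\alpha_j\neq 13$ and $\alpha_j+13-i\neq 13$ are automatic and that the assignment is unique, which the paper leaves implicit.
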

\begin{proof}
    It is enough to show that every element in $I(4,12)$ is included in one and only one of the three different types of sets on the  right hand side of the equality. Let $\alpha=
    (\alpha_1,\alpha_2,\alpha_3,\alpha_4)\in I(4,12)$. For $\alpha_i+ \alpha_j\neq 13$, where $i,j=1,2,3,4$, it follows that $(\alpha_1,\alpha_2,\alpha_3,\alpha_4)\in \Sigma
    \{\alpha_1,\alpha_2,\alpha_3,\alpha_4\}$. If for some $\alpha=(\alpha_1,\alpha_2,\alpha_3,\alpha_4)$ we have $\alpha_1 + \alpha_2 \neq 13$, without loss of generality we may assume that $\alpha_3 + \alpha_4 =13$, and then $\alpha \in \Sigma\{\alpha_1,\alpha_2\}$. Finally, if for some $\alpha=(\alpha_1,\alpha_2,\alpha_3,\alpha_4)$, $\alpha_1 + \alpha_2 = \alpha_3 + \alpha_4= 13$, then $\alpha \in C_2(\Sigma_6)$.
\end{proof}
\begin{rem}\label{rem1}
For each $1\leq \alpha_1<\alpha_2\leq 12$ such that $\alpha_1+\alpha_2\neq 13$, we have that $|\Sigma\{\alpha_1,\alpha_2\}|=4$ and $|\Sigma\{\alpha_1,\alpha_2\}: 1\leq \alpha_1<\alpha_2\leq 12|=60$. Hence, in the second term of the displayed expression in Lemma \ref{Part} there are $240$ indexes.
 Also, for each $1\leq \alpha_1<\alpha_2<\alpha_3<\alpha_4\leq 12$ such that $\alpha_i+\alpha_j\neq 13$, we have that $|\Sigma(\alpha_1,\alpha_2,\alpha_3,\alpha_4)|=1$, and $|\Sigma\{\alpha_1,\alpha_2,\alpha_3,\alpha_4\}: 1\leq \alpha_1<\alpha_2<\alpha_3<\alpha_4\leq 12|=240$. Hence, in the third term of the displayed expression in Lemma \ref{Part} there are $240$ indexes.
\end{rem}

We traslate now the combinatorial data of Lemma \ref{Part} in terms of the systems of linear equations associated to the contraction map $f:\wedge^6E \longrightarrow \wedge^4E$. For each $\alpha_{rs}\in I(4,12)$ consider the  linear  equation \eqref{Eqxx}. Now, for the part $C_2(\Sigma_6)$ in Lemma \ref{Part}, writing
    \begin{align*}
    C_2(\Sigma_6) & =
    \{(P_1,P_2),(P_1,P_3),(P_1,P_4),(P_1,P_5),(P_1,P_6),
    (P_2,P_3),(P_2,P_4),\\ & (P_2,P_5),(P_2,P_6),
    (P_3,P_4),(P_3,P_5),(P_3,P_6),(P_4,P_5),(P_4,P_6),(P_5,P_6)\}.
    \end{align*}
 For the set $C_2(\Sigma_6)$ ordered as above,  filling the symbols $\sqcup$ in \eqref{eq2.4} we obtain the system of linear equations
    \begin{align}\label{pi1}
    \Pi_{(P_1P_2)} & :=  X_{P_3 P_1P_2} + X_{P_4P_1P_2} + X_{P_5P_1P_2}
    + X_{P_6P_1P_2}=0\nonumber\\
    \Pi_{(P_1P_3)} & :=  X_{P_2 P_1P_3} + X_{P_4P_1P_3} + X_{P_5P_1P_3}
    + X_{P_6P_1P_3}=0\nonumber\\
    \Pi_{(P_1P_4)} & :=  X_{P_2 P_1P_4} + X_{P_3P_1P_4} + X_{P_5P_1P_4}
    + X_{P_6P_1P_4}=0\nonumber\\
    \Pi_{(P_1P_5)} & :=  X_{P_2 P_1P_5} + X_{P_3P_1P_5} + X_{P_4P_1P_5}
    + X_{P_6P_1P_5}=0\\
    \Pi_{(P_1P_6)} & :=  X_{P_2 P_1P_6} + X_{P_3P_1P_6} + X_{P_4P_1P_6}
    + X_{P_5P_1P_6}=0\nonumber\\
    \Pi_{(P_2P_3)} & :=  X_{P_1 P_2P_3} + X_{P_4P_1P_3} + X_{P_5P_1P_3}
    + X_{P_6P_1P_3}=0\nonumber\\
    \vdots & \quad \quad \quad \quad \quad \quad \quad  \vdots \nonumber\\
    \Pi_{(P_5P_6)} & :=  X_{P_1 P_5P_6} + X_{P_2P_5P_6} + X_{P_3P_5P_6}
    + X_{P_4P_5P_6}=0.\nonumber
    \end{align}
 where we identify the variable $X_{P_i P_jP_k}=X_{P'_i P'_j P'_k}$ if
 $\text{supp\,}\{P_i P_jP_k\}=\text{supp\,}\{P'_i P'_j P'_k\}$.

  Similarly, for the second part in the partition of $I(4,12)$ in Lemma \ref{Part}, for the sets $\Sigma
    \{\alpha_1,\alpha_2\}$, for each $1\leq\alpha_1<\alpha_2 \leq 12$, consider  the system of four homogeneous  linear equations $\Pi_{\alpha_{rs}}$ of \eqref{eq2.4}, for each $\alpha_{rs}\in\Sigma \{\alpha_1\alpha_2\}$, which have the form:
  \begin{align}\label{pi2}
    X_1 + X_2 + X_3  = 0\nonumber\\
    X_1 + X_4 + X_5  = 0 \\
    X_2 + X_4 + X_6  = 0\nonumber\\
    X_3 + X_5 + X_6  = 0.\nonumber
  \end{align}
and  there are $60$ such systems of linear equations \eqref{pi2}.
For example, for $\Sigma\{1,2\}=\{12P_3,12P_4,12P_5,12P_6\}$,  setting $A=10$, $B=11$ and $C=12$, as in \eqref{eq2.4},
the system \eqref{pi2} is
  \begin{align*}
 \Pi_{12P_3}&:=   X_{412P_39} + X_{512P_38} + X_{612P_37}  = 0 \\
  \Pi_{12P_4}&:=    X_{312P_4A} + X_{512P_48}+ X_{612P_47}  = 0 \\
  \Pi_{12P_5}&:=    X_{312P_5A} + X_{412P_59} + X_{612P_57}  = 0 \\
  \Pi_{12P_6}&:=    X_{312P_6A} + X_{412P_69} + X_{512P_68}  = 0.
  \end{align*}

 Finally, for the third part in the partition of $I(4,12)$ in Lemma \ref{Part}, for each set $\Sigma\{\alpha_1,\alpha_2,\alpha_3,\alpha_4\}$, with $1\leq\alpha_1<\alpha_2<\alpha_3<\alpha_4 \leq 12$, the matrix ${\EuScript L}_2$ of the corresponding linear equation
    $\Pi_{\alpha_1,\alpha_2,\alpha_3,\alpha_4}$ of \eqref{eq2.4} is a matrix (row vector)
    with the $P_{i,\alpha_1,\alpha_2,\alpha_3,\alpha_4}$ and
    $P_{j,\alpha_1,\alpha_2,\alpha_3,\alpha_4}$ components equal to one
    and all the other components equal to zero for $1\leq i< j \leq 6$. There are $240$ such matrices ${\EuScript L}_2$.
    The size of this vector is $1\times C^{12}_6$.  For example,
$$\Pi_{1234}=X_{512348}+X_{612347}=0$$
with corresponding matrix ${\EuScript L}_2:=(0,\ldots,0,1,0,\ldots,0,1,0,\ldots,0)$, where the first $1$ is in the coordinate corresponding to $123458$ and the second $1$ is in the coordinate correponding to the $123467$.

The coefficient matrix associated to the system   \eqref{pi1}  is
{\label{l3-4}}
$$ {\EuScript L}_4 = \left( \begin{tabular}{cccccccccccccccccccc}
1 & 1 & 1 & 1 & 0 & 0 & 0 & 0 & 0 & 0 & 0 & 0 & 0 & 0 & 0 & 0 & 0 &
0 & 0 & 0 \cr \cline{1-4}\multicolumn{1}{|c} 1 & 0 & 0 & 0 &
\multicolumn{1}{|c}{$1$} & 1 & 1 & 0 & 0 & 0 & 0 & 0 & 0 & 0 & 0 & 0
& 0 & 0 & 0 & 0 \cr \cline{5-7}\multicolumn{1}{|c} 0 & 1 & 0 & 0 &
\multicolumn{1}{|c}{$1$} & 0 &  0 & \multicolumn{1}{|c} 1 & 1 & 0 &
0 & 0 & 0 & 0 & 0 & 0 & 0 & 0 & 0 & 0 \cr
\cline{8-9}\multicolumn{1}{|c} 0 & 0 & 1 & 0 & \multicolumn{1}{|c} 0
& 1 & 0 & \multicolumn{1}{|c} 1 & 0 & \multicolumn{1}{|c} 1 & 0 & 0
& 0 & 0 & 0 & 0 & 0 & 0 & 0 & 0 \cr \cline{10-10}\multicolumn{1}{|c}
0 & 0 & 0 & 1 & \multicolumn{1}{|c} 0 & 0 &  1 & \multicolumn{1}{|c}
0 & 1 & \multicolumn{1}{|c} 1 & \multicolumn{1}{|c} 0 & 0 & 0 & 0 &
0 & 0 & 0 & 0 & 0 & 0 \cr \cline{1-10}\multicolumn{1}{|c} 1 & 0 & 0
& 0 & 0 & 0 & 0 & 0 & 0 & 0 & \multicolumn{1}{|c} 1 & 1 & 1 & 0 & 0
& 0 & 0 & 0 & 0 & 0 \cr \cline{11-13}\multicolumn{1}{|c} 0 & 1 & 0 &
0 & 0 & 0 & 0 & 0 & 0 & 0 &\multicolumn{1}{|c} 1 & 0 & 0 &
\multicolumn{1}{|c} 1 & 1 & 0 & 0 & 0 & 0 & 0 \cr
\cline{14-15}\multicolumn{1}{|c} 0 & 0 & 1 & 0 & 0 & 0 & 0 & 0 & 0 &
0 & \multicolumn{1}{|c} 0 & 1 & 0 & \multicolumn{1}{|c} 1 & 0 &
\multicolumn{1}{|c} 1 & 0 & 0 & 0 & 0 \cr \cline{16-16}
\multicolumn{1}{|c} 0 & 0 & 0 & 1 & 0 & 0 & 0 & 0 & 0 & 0 &
\multicolumn{1}{|c} 0 & 0 & 1 & \multicolumn{1}{|c} 0 & 1 &
\multicolumn{1}{|c} 1 & \multicolumn{1}{|c} 0 & 0 & 0 & 0 \cr
\cline{11-16}\multicolumn{1}{|c} 0 & 0 & 0 & 0 & 1 & 0 & 0 & 0 & 0 &
0 & \multicolumn{1}{|c} 1 & 0 & 0 & 0& 0 & 0 & \multicolumn{1}{|c} 1
& 1 & 0 & 0 \cr \cline{17-18} \multicolumn{1}{|c} 0 & 0 & 0 & 0 &  0
& 1 & 0 & 0 & 0 & 0 &\multicolumn{1}{|c}  0 & 1 & 0 & 0& 0 & 0 &
\multicolumn{1}{|c} 1 & 0 & \multicolumn{1}{|c} 1 & 0 \cr
\cline{19-19}\multicolumn{1}{|c} 0 & 0 & 0 & 0 & 0 & 0 & 1 & 0 & 0 &
0 & \multicolumn{1}{|c} 0 & 0 & 1 & 0& 0 & 0 & \multicolumn{1}{|c} 0
& 1 & \multicolumn{1}{|c} 1 & \multicolumn{1}{|c} 0 \cr
\cline{17-19}\multicolumn{1}{|c} 0 & 0 & 0 & 0 & 0 & 0 & 0 & 1 & 0 &
0 & \multicolumn{1}{|c} 0 & 0 &  0 & 1& 0 & 0 & \multicolumn{1}{|c}
1 & 0 & 0 & \multicolumn{1}{|c} 1 \cr
\cline{20-20}\multicolumn{1}{|c} 0 & 0 & 0 & 0 & 0 & 0 & 0 & 0 & 1 &
0 & \multicolumn{1}{|c} 0 & 0 & 0 & 0& 1 & 0 & \multicolumn{1}{|c} 0
& 1 & 0 & \multicolumn{1}{|c|} 1 \cr
\cline{20-20}\multicolumn{1}{|c} 0 & 0 & 0 & 0 & 0 & 0 & 0 & 0 & 0 &
1 & \multicolumn{1}{|c} 0 & 0 & 0 & 0& 0 & 1 & \multicolumn{1}{|c} 0
& 0 & 1 & \multicolumn{1}{|c|} 1 \cr \cline{1-20}
\end{tabular} \right) \quad $$
and the corresponding matrix associated to the system \eqref{pi2} is
$${\EuScript L}_3 =\begin{pmatrix}
1 & 1 & 1 & 0 & 0 & 0 \\
1 & 0 & 0 & 1 & 1 & 0 \\
0 & 1 & 0 & 1 & 0 & 1 \\
0 & 0 & 1 & 0 & 1 & 1
\end{pmatrix}.$$
For the matrix ${\EuScript L}_4$, adding its 14 last rows to the first row, we obtain that
${\EuScript L}_4$ is row-equivalent to the matrix $\begin{pmatrix}
\overline{3}\\
\widehat{{\EuScript L}}_4
\end{pmatrix}$, where $\widehat{{\EuScript L}}_4$ is the matrix obtained from ${\EuScript L}_4$ deleting its first row, and $\overline{3}$ is a row all whose entries are equal to $3$. Thus, if $\text{\rm char}({\mathbb F})=3$, the rank of ${\EuScript L}_4$ is $\leq 14$. Moreover, a direct computation shows that its rank is indeed $14$ in characteristic $3$.
Similarly, for the matrix ${\EuScript L}_3$, adding the last three rows to the first one we see that if $\text{\rm char}({\mathbb F})=2$ then the rank of ${\EuScript L}_3$ is $\leq 3$, and again a computation shows that is exactly $3$.
Moreover,  $\text{\rm rank}({\,\EuScript L}_3)=4$ if
$\text{\rm char}({\mathbb F})\neq 2$,  and
     $\text{\rm rank}({\,\EuScript L}_4) =15$ if $\text{\rm char}({ \mathbb F}) \neq 2,3$.

\begin{prop}\label{MatB}
For any field ${\mathbb F}$, the matrix $B$, of size $C^{12}_4\times C^{12}_6$, associated to the homogeneous system
    $\Pi=\{ \Pi_{\alpha_{rs}} | \alpha_{rs}\in I(4,12)\}$ can be given by a
    block diagonal matrix  as follows
    \begin{align*}
    B  &={\EuScript L}_4 \oplus \Big( \bigoplus_{1\leq \alpha_1 <
        \alpha_2 \leq 12 \atop \alpha_1 + \alpha_2 \neq 13 } {\EuScript
        L}_3^{(\alpha_1, \alpha_2)} \Big) \oplus \Big(\bigoplus_{1\leq
        \alpha_1< \alpha_2 <\alpha_3 < \alpha_4 \leq 12 \atop \alpha_i +
        \alpha_j \neq 13 }
    {\EuScript L}_{2}^{(\alpha_1, \alpha_2, \alpha_3, \alpha_4)} \Big),\nonumber \\
    B &=
    \begin{pmatrix}
    \begin{tabular}{|c c c c c c c}
    \cline{1-1}\multicolumn{1}{|c|}{${\EuScript L}_4$}  & & & & \multicolumn{1}{c}{ } &    \\
    \cline{2-1}\cline{1-1} \multicolumn{1}{c}{}  &
    \multicolumn{1}{|c|}{${\EuScript L}_3$} & & &
    \multicolumn{1}{c}{$0$}& \\ 
    \cline{2-2}\multicolumn{1}{c}{} & \multicolumn{1}{c}{} &
    \multicolumn{1}{c}{$ \ddots$} & & \multicolumn{1}{c}{ }  & \\ 
    \cline{4-3}\multicolumn{1}{c}{ } &\multicolumn{1}{c}{$0$ } &
    \multicolumn{1}{c|}{ }&\multicolumn{1}{c|}{${\EuScript L}_3$}  & & \\
    \cline{5-3}\cline{4-4} \multicolumn{1}{c}{} & \multicolumn{1}{c}{}
    & \multicolumn{1}{c}{}& \multicolumn{1}{c|}{} &
    \multicolumn{1}{c|}{${\EuScript L}_2$}   & \\
    \cline{5-5} \multicolumn{1}{c}{} & \multicolumn{1}{c}{} &
    \multicolumn{1}{c}{}& \multicolumn{1}{c}{} & \multicolumn{1}{c}{}
    & \multicolumn{1}{c}{$\ddots$}
    \\ \cline{7-7}
    \multicolumn{1}{c}{} & \multicolumn{1}{c}{} &
    \multicolumn{1}{c}{}& \multicolumn{1}{c}{ } &
    \multicolumn{1}{c}{ } & \multicolumn{1}{c|}{ } &   \multicolumn{1}{c|}{${\EuScript L}_2$} \\
    \cline{7-7}
    \end{tabular} \end{pmatrix},
    \end{align*}
    where there are $1$ matrix ${\EuScript L}_4$, $60$ submatrices ${\EuScript L}_3$,  and $240$ submatrices ${\EuScript L}_2$.
\end{prop}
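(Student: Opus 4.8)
The plan is to show that the matrix $B$ decomposes as a direct sum by tracking how the partition of $I(4,12)$ in Lemma \ref{Part} controls which variables appear in which Pl\"ucker linear relations \eqref{Eqxx}. Recall that the rows of $B$ are indexed by the index sets $\alpha_{rs}\in I(4,12)$ (the labels of the equations $\Pi_{\alpha_{rs}}$) and the columns by $\alpha\in I(6,12)$ (the Pl\"ucker variables $X_\alpha$). The key observation is that the variable $X_{i,\alpha_{rs},13-i}$ occurring in $\Pi_{\alpha_{rs}}$ is obtained from the four-element set $\alpha_{rs}$ by adjoining a pair $\{i,13-i\}$ with $i+({13-i})=13$; hence \emph{every} six-element index $\alpha$ appearing with nonzero coefficient in $\Pi_{\alpha_{rs}}$ contains $\alpha_{rs}$ together with exactly one ``symplectic pair'' $\{i,13-i\}$. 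First I would make precise this correspondence: a variable $X_\alpha$ with $\alpha\in I(6,12)$ is ``visible'' from a row $\alpha_{rs}$ only if $\alpha_{rs}\subseteq\alpha$ and $\alpha\setminus\alpha_{rs}$ is a symplectic pair.

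Next I would organize the columns according to how many symplectic pairs the index $\alpha\in I(6,12)$ contains, mirroring Lemma \ref{Part} but one level up. Write $\alpha\in I(6,12)$ and count the pairs $\{i,13-i\}\subseteq\alpha$: the count is $0$, $1$, $2$, or $3$. If $\alpha$ contains three symplectic pairs, then $\alpha=\mathrm{supp}(P_{i_1})\cup\mathrm{supp}(P_{i_2})\cup\mathrm{supp}(P_{i_3})$ for a $3$-subset of $\{1,\dots,6\}$, i.e.\ $\alpha$ is one of the $C^6_3=20$ variables appearing in the system \eqref{pi1}; the only rows that see such an $\alpha$ are those $\alpha_{rs}$ which are a union of two symplectic pairs, i.e.\ the $C^6_2=15$ rows of $C_2(\Sigma_6)$, and conversely those $15$ rows only involve these $20$ variables. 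This is exactly the block ${\EuScript L}_4$. If $\alpha$ contains exactly one symplectic pair $\{i,13-i\}$, then removing a visible symplectic pair from $\alpha$ forces us to remove $\{i,13-i\}$ itself, leaving the four non-paired elements $\{\alpha_1,\alpha_2\}\cup\{\alpha_3,\alpha_4\}$ — but among these four, no two sum to $13$, so the resulting row-index lies in some $\Sigma\{\beta_1,\beta_2,\beta_3,\beta_4\}$; here each such $\alpha$ is seen by exactly one row and each such row (there are $240$ of them) sees exactly... one or two such variables, giving the $1\times C^{12}_6$ blocks ${\EuScript L}_2$. If $\alpha$ contains exactly two symplectic pairs $\{i,13-i\}$ and $\{j,13-j\}$ plus two non-paired elements $\{\beta_1,\beta_2\}$ with $\beta_1+\beta_2\neq 13$, then the visible rows are those obtained by deleting one of the two pairs, landing in $\Sigma\{\beta_1,\beta_2\}$; there are $60$ choices of $\{\beta_1,\beta_2\}$, and for each one the $4$ rows in $\Sigma\{\beta_1,\beta_2\}$ and the $6=C^4_2$... (in fact $4$, matching Remark \ref{rem1}) relevant variables interact precisely according to ${\EuScript L}_3$, as illustrated by the $\Sigma\{1,2\}$ example and the incidence pattern recorded in \eqref{pi2}.

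With this two-sided bookkeeping in hand, I would conclude as follows. The partition of Lemma \ref{Part} partitions the rows; the analogous partition by number of symplectic pairs partitions the columns; and the ``visibility'' correspondence shows no variable is shared between two of the three families and, within a family, no variable is shared between two distinct $\Sigma\{\beta_1,\beta_2\}$'s or two distinct $\Sigma\{\beta_1,\beta_2,\beta_3,\beta_4\}$'s. Therefore, after permuting rows and columns to group each family (and each individual $\Sigma$-set within a family) together, $B$ is block-diagonal with the stated blocks: one copy of ${\EuScript L}_4$, sixty copies of ${\EuScript L}_3$ (one for each admissible pair $(\alpha_1,\alpha_2)$, consistent with Remark \ref{rem1}), and $240$ copies of ${\EuScript L}_2$. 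The counts of rows add up as $15+60\cdot 4+240\cdot 1=15+240+240=495$, matching $|I(4,12)|$ and the size $C^{12}_4=495$ of the row space; the column counts likewise reconcile with $C^{12}_6=924$ once one adds the $20$ columns of ${\EuScript L}_4$, the $60\cdot (\text{width of }{\EuScript L}_3)$, and the remaining columns spread among the ${\EuScript L}_2$ blocks (columns not visible from any row contribute zero blocks and may be discarded when computing the rank). The main obstacle I anticipate is the combinatorial verification that the visibility relation is ``closed'' — that is, that a row in one $\Sigma$-family can never see a variable belonging to a different $\Sigma$-family or a different $\Sigma$-set; this amounts to carefully checking, using the constraints $i+\alpha_j\neq 13$ and $\alpha_j+13-i\neq 13$ built into the definition of $\Sigma\{\alpha_1,\alpha_2\}$, that adjoining a symplectic pair to a row-index of one type always produces a column-index of the matching type, and I would handle it by a short case analysis on the number of symplectic pairs, exactly paralleling the proof of Lemma \ref{Part}.
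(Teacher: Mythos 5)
Your argument is correct and is essentially the paper's own proof written out in full: the paper simply invokes the partition of Lemma \ref{Part} together with the one-to-one correspondence between those index sets and their systems of equations, which is exactly your row/column ``visibility'' bookkeeping (rows grouped by Lemma \ref{Part}, columns grouped by the number of symplectic pairs they contain). Two harmless slips worth fixing: each row of the third family sees \emph{exactly} two variables (not ``one or two''), since a $4$-set with no two entries summing to $13$ leaves exactly two disjoint symplectic pairs; and each $\Sigma\{\alpha_1,\alpha_2\}$ block has $4$ rows and $6=C^4_2$ columns --- Remark \ref{rem1} counts the rows, so your parenthetical ``in fact $4$'' correction was unnecessary.
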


\begin{proof} It follows from the observation that $I(4,12)$ is a disjoint union of the
sets described in Lemma \ref{Part}   and the one-to-one relationship between those sets and their corresponding system of homogeneous  linear equations.
\end{proof}

For the contraction map $f:\wedge^6 E \longrightarrow \wedge^4E$ given by $(\ref{mapf})$, we obtain,
from Proposition \ref{MatB}, the following consequences:
\begin{enumerate}
\item[(1)] If  ${\rm char}(\mathbb{F})$$\,= 3$, then ${\rm rank}(B) = {\rm rank}({\EuScript L}_4) +60\, {\rm rank}({\EuScript L}_3) +240\,{\rm rank}({\EuScript L}_2) = 494$.
\item[(2)]  $\dim_{\mathbb{F}}(\ker f) = C_{6}^{12}-494 = 430$.
\item[(3)] If {\rm char}$({\mathbb{F}})=3$, then the map $f$ is not surjective.
\end{enumerate}

From Proposition \ref{MatB} we calculate the codimension of the linear
section ${\mathbb P}(\ker f)$ in ${\mathbb P}(\nwedge^6E)$ for any characteristic. That is
\begin{center}
\begin{tabular}{|c|c|c|}
\cline{1-3}$\text{char}(F)$  & $\text{rank}(B)$ & \text{codimension of} ${\mathbb P}(\ker f)$ \\
\cline{1-3} 0 & 495 & 429 \\
\cline{1-3} 2 & 430  & 494 \\
\cline{1-3}3  & 494 & 430 \\
\cline{1-3} $ p \geq 5$   & 495 & 429  \\
\cline{1-3}
\end{tabular}
\end{center}

This computation shows that the codimension of ${\mathbb P}(\ker f)$ in ${\mathbb P}(\nwedge^nE)$ depends on the dimension $2n$ of the symplectic space $E$ and the characteristic of the ground field ${\mathbb F}$. In a forthcoming paper the authors show that, in general, the contraction map $f$  is surjective if and only if $\text{char}({\mathbb F})=0$ or $\text{char}({\mathbb F})\geq m$, for a certain integer $m$.

\end{document}